\newtheorem{theorem}{Theorem}[section]
\newtheorem{proposition}[theorem]{Proposition}
\newtheorem{lemma}[theorem]{Lemma}
\newtheorem{corollary}[theorem]{Corollary}
\theoremstyle{definition}
\newtheorem{definition}[theorem]{Definition}
\theoremstyle{remark}
\newcommand{\ba}{\setminus}
\newcommand{\F}{\mathcal{F}}
\newcommand{\btu}{\bigtriangleup}
\newcommand{\BL}{\overline{L}}
\newcommand{\BA}{\overline{A}}
\newcommand{\al}{\alpha}
\newcommand{\be}{\beta}
\begin{document}


\title{The structure of delta-matroids with width one twists}

\author[C.C.]{Carolyn Chun}
\address{United States Naval Academy, Annapolis, MD, 21402 USA.}
\email{chun@usna.edu}

\author[R.H.]{Rhiannon Hall}
\address{Department of Mathematics, Brunel University London, Uxbridge, Middlesex, UB8 3PH, United Kingdom.}
\email{rhiannon.hall@brunel.ac.uk}

\author[C.M.]{Criel Merino}
\address{Instituto de Matem\'aticas, 
Universidad nacional Aut\'onoma de M\'exico, 
Ciudad de M\'exico, 04510
M\'exico. Investigaci\'on realizada gracias al Programa UNAM-DGAPA-PAPIIT IN102315}
\email{merino@matem.unam.mx}

\author[I.~Moffatt]{Iain Moffatt}
\address{Department of Mathematics,
Royal Holloway,
University of London,
Egham,
Surrey,
TW20 0EX,
United Kingdom.}
\email{iain.moffatt@rhul.ac.uk}

\author[S.N.]{Steven Noble}
\address{Department of Economics, Mathematics and Statistics, Birkbeck, University of London, Malet Street, London, WC1E 7HX, United Kingdom.}
\email{s.noble@bbk.ac.uk}

\subjclass[2010]{05B35}
\keywords{delta-matroid, matroid,  partial dual, excluded minor, twist, width}
\date{\today}

\begin{abstract}  
The width of a delta-matroid is the difference in size between a  maximal and minimal feasible set. 
We give a Rough Structure Theorem for delta-matroids that admit a twist of width one. We 
apply this theorem to give an excluded minor characterisation of delta-matroids that admit a twist of width at most one. 
\end{abstract}

\maketitle

\section{Introduction, results and notation}\label{s1}

Delta-matroids are a generalisation of matroids introduced by A.~Bouchet in \cite{ab1}. They can be thought of as generalising topological graph theory in the same way that matroids can be thought of as generalising graph theory (see, e.g.,  \cite{CMNR1}). Roughly speaking, delta-matroids arise by dropping the requirement that bases are of the same size in the standard definition of a matroid in terms of its bases. (Formal definitions are provided below.) In the context of delta-matroids these generalised ``bases'' are called ``feasible sets''. A basic parameter of a delta-matroid is its ``width'', which is the difference between the sizes of a largest and a smallest of its feasible sets. One of the most fundamental operations in delta-matroid theory is the ``twist''. In this paper we examine how the structure of a delta-matroid determines the width of the delta-matroids that are in its equivalence class under twists.

  Formally, a \emph{delta-matroid} $D=(E,\F)$ consists of a finite set $E$ and a non-empty set $\mathcal{F}$ of subsets of $E$ that  satisfies the \emph{Symmetric Exchange Axiom}:
for all $X,Y\in \mathcal{F}$, if there is an element $u\in X\triangle Y$, then there is an element $v\in X\triangle Y$ such that $X\triangle \{u,v\}\in \mathcal{F}$. 
Here $X\triangle Y$ denotes the symmetric difference of sets $X$ and $Y$. Note that it may be the case that $u=v$ in the Symmetric Exchange Axiom.
Elements of $\mathcal{F}$ are called \emph{feasible sets} and $E$ is the \emph{ground set}. We often use $\mathcal{F}(D)$ and $E(D)$ to denote the set of feasible sets and the ground set, respectively, of $D$. A \emph{matroid} is a delta-matroid whose feasible sets are all of the same size. In this case the feasible sets are called  \emph{bases}. This definition of a matroid is a straightforward reformulation of the standard one in terms of bases.

In general a delta-matroid has feasible sets of different sizes. The \emph{width} of a delta-matroid, denoted $w(D)$, is the difference between the sizes of its largest and smallest feasible sets:
$ w(D):=  \max\limits_{F\in \F}   |F| - \min\limits_{F\in \F}   |F|$.

Twists, introduced by Bouchet in~\cite{ab1}, are one of the fundamental operations of delta-matroid theory. Given a delta-matroid $D=(E,{\mathcal{F}})$ and some subset $A\subseteq E$, the  \emph{twist} of $D$ with respect to $A$, denoted by $D* A$, is the delta-matroid given by $(E,\{A\btu F :  F\in \mathcal{F}\})$.   (At times we write $D\ast e$ for $D\ast \{e\}$.) Note that the ``empty twist'' is $D*\emptyset = D $. The \emph{dual} of $D$, written $D^*$, is equal to $D*E$. Moreover, in general, the twist can be thought of as a ``partial dual'' operation on delta-matroids.

Forming the twist of a delta-matroid usually changes the sizes of its feasible sets and its width. Here we are interested in the problem of recognising when a delta-matroid has a twist of small width. Our results are a Rough Structure Theorem for delta-matroids that have a twist of width one, and an excluded minor characterisation of delta-matroids that have a twist of width at most one. 

To state the Rough Structure Theorem we need the following. Let $D=(E,\mathcal{F})$ be a delta-matroid and let $\mathcal{F}_{\min}$ be the set of feasible sets of minimum size. Then  $D_{\min}:=(E,\mathcal{F}_{\min})$ is a matroid.  
For  a matroid $M$ with ground set $E$, a subset $A$ of $E$ is said to be a \emph{separator} of $M$ if $A$ is a union of components of $M$. Note that both $\emptyset$ and $E$ are always separators. In terms of the matroid rank function, where the rank $r(X)$ of a set $X\subseteq E$ is defined to be the size of the largest intersection of $X$ with a basis of $M$, the set $A$ is a separator if and only if $r(A) + r(E-A) = r(M)$. Throughout the paper we use $\BA$ for the complement $E- A$ of $A$, and $D|X$ denotes the restriction of $D$ to $X\subseteq E$ (see the beginning of Section~\ref{s2} for its definition).

We now state the first of our two main results: a Rough Structure Theorem for delta-matroids admitting a twist of width one.

\begin{theorem}\label{tm1}
Let $D=(E,\F)$ be a delta-matroid. Then $D$ has a twist of width one if and only if there is some $A\subseteq E$ such that 
\begin{enumerate}
\item $A$ is a separator of $D_{\min}$,
\item $D|A$ is a matroid, and
\item $D|\BA$ is of width one.
\end{enumerate}
\end{theorem}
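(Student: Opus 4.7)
My plan is to prove the two directions separately. Both rely on a decomposition of feasible sets along a separator of $D_{\min}$, which I assume is available as a structural tool from earlier in the paper: if $A$ is a separator of $D_{\min}$, then every $F \in \F(D)$ can be written as $F = F_A \cup F_{\BA}$ with $F_A \in \F(D|A)$ and $F_{\BA} \in \F(D|\BA)$.

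For sufficiency, assume $A$ satisfies (1)--(3) and show that $D*A$ has width one. Apply the decomposition above: condition (2) forces $|F_A| = r_{D_{\min}}(A)$ to be constant across feasible sets, and condition (3) forces $|F_{\BA}|$ to take exactly two values differing by one. The feasible sets of $D*A$ then take the form $(A \setminus F_A) \cup F_{\BA}$, of sizes $(|A| - r_{D_{\min}}(A)) + |F_{\BA}|$, which again take exactly two values differing by one, so $D*A$ has width one.

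For necessity, suppose $D*X$ has width one for some $X \subseteq E$; set $D' := D*X$ with min feasibles of size $k$ and max feasibles of size $k+1$. I would construct $A$ explicitly from $X$ and the internal structure of $D'$, via $A := X \btu N$ for a canonical subset $N \subseteq E$ marking the ``variable'' elements of $D'$---a natural candidate being $N = \bigcup \F_{\max}(D') \setminus \bigcap \F_{\min}(D')$. The verification then splits into: (a) the twist by $X$ converts separators of $(D')_{\min}$ into separators of $D_{\min}$, giving (1); (b) $D|A$ captures precisely the ``invariant'' part of $D'$, which has no size variation, giving (2); and (c) the $\pm 1$ size variation of $D'$ is entirely inherited by $D|\BA$, giving (3).

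\textbf{Main obstacle.} The principal challenge is identifying the correct set $N$ and verifying (1). The twist by $X$ can turn a min feasible set of $D*X$ into a non-min feasible of $D$, so $D_{\min}$ and $(D')_{\min}$ need not be related by a simple matroid operation, and separator structures must be tracked carefully across the twist. Once $A$ is pinned down, (2) and (3) should reduce to direct bookkeeping with the feasible-set decomposition (essentially the sufficiency computation run in reverse). If the direct construction of $A$ proves unwieldy, a backup strategy is induction on $|X|$, peeling off at each step an element on which the twist acts trivially, until one is reduced either to the case $X = \emptyset$ (where $D$ itself has width at most one and $A = \emptyset$ works) or to a base case handled directly.
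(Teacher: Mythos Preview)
Your sufficiency argument rests on a decomposition that is false: even when $A$ is a separator of $D_{\min}$ and conditions (2)--(3) hold, it is \emph{not} true that $F\cap A\in\F(D|A)$ for every $F\in\F(D)$, so condition~(2) does \emph{not} force $|F\cap A|$ to be constant. Take $E=\{a,b\}$, $\F(D)=\{\emptyset,\{b\},\{a,b\}\}$, $A=\{a\}$. Then $D_{\min}$ is the rank-zero matroid (so $A$ is a separator), $\F(D|A)=\{\emptyset\}$ (a matroid), and $\F(D|\BA)=\{\emptyset,\{b\}\}$ (width one). Yet for $F=\{a,b\}$ we have $F\cap A=\{a\}\notin\F(D|A)$, and $|F\cap A|$ takes both values $0$ and $1$ as $F$ ranges over $\F(D)$. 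Your size computation $|F\btu A|=|A|-|F\cap A|+|F\cap\BA|$ then genuinely depends on two varying quantities, and the argument collapses.

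The paper avoids this entirely by proving a width formula
\[
w(D*A)=w(D|A)+w(D|\BA)+2\,\lambda_{D_{\min}}(A),
\]
established via Bouchet's birank $\rho$ (the maximum feasible size in $D*A$ equals $\rho(\BA)$, the minimum equals $|E|-\rho(A)$) together with a known expression for $w(D|A)$ in terms of $\rho$ and the rank/nullity of $D_{\min}$. Both directions of the theorem are then immediate: $w(D*A)=1$ forces each nonnegative summand to be $0,1,0$ in some order, and conversely. In particular, for necessity, if $D*X$ has width one you may simply take $A=X$ or $A=\bar X$; your proposed modification $A=X\btu N$ is unnecessary, and the concern about tracking separators across twists does not arise.
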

We  actually prove a result that is stronger than Theorem~\ref{tm1}. This stronger result appears below as Theorem~\ref{tt} and  the present theorem follows immediately from it.

As an application of Theorem~\ref{tm1}, we find an excluded minor characterisation of the class of delta-matroids that have a twist of width one as our second main result, Theorem~\ref{t1}.    This class of delta-matroids is shown to be minor closed in   Proposition~\ref{p1}, and its set of excluded minors comprises the delta-matroids in the following definition together with their twists. 

\begin{definition}\label{d1}
Let $D_1$ denote the delta-matroid on the elements $a,b$ with feasible sets
\[\F(D_1)=\{ \emptyset, \{a\}, \{b\}, \{a,b\}   \}.\]
For $i=2,\ldots, 5$ let  $D_i$ denote the delta-matroid on the elements $a,b,c$ with feasible sets given by
\begin{align*}
\F(D_2)&=\{ \emptyset, \{a\}, \{b\}, \{c\},\{a,b,c\}   \}, \\
\F(D_3)&=\{ \emptyset, \{a,b\}, \{b,c\}, \{a,c\}   \}, \\
\F(D_4)&=\{ \emptyset, \{a,b\}, \{b,c\}, \{a,c\},\{a,b,c\}   \}, \\
\F(D_5)&=\{ \emptyset, \{a\}, \{a,b\}, \{b,c\}, \{a,c\}  \}.
\end{align*}
Throughout this paper $D_1, \ldots,D_5$  refer exclusively to these delta-matroids.
Let $\mathcal{D}_{[5]}$ be the set of all twists of these delta-matroids.  Note that $D_i\in \mathcal{D}_{[5]}$ for all $i\in\{1,2,\dots ,5\}$ via the empty twist.
\end{definition}

\begin{theorem}\label{t1}
A delta-matroid has a twist of width at most one if and only if it has no minor isomorphic to a member of $\mathcal{D}_{[5]}$. 
\end{theorem}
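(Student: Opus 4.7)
\emph{Forward direction.} Suppose $D$ has a twist of width at most one. By Proposition~\ref{p1} the class of delta-matroids with a twist of width at most one is minor-closed, so it suffices to verify that no member of $\mathcal{D}_{[5]}$ has a twist of width at most one. Since a twist of a twist is a twist and $\mathcal{D}_{[5]}$ consists of the twists of $D_1,\ldots,D_5$, this reduces to checking, for each $i\in\{1,\ldots,5\}$ and each $A\subseteq E(D_i)$, that $w(D_i\ast A)\ge 2$. Since $|E(D_i)|\le 3$, this is a routine finite case check.

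\emph{Backward direction, set-up.} Suppose $D$ has no minor in $\mathcal{D}_{[5]}$. The hypothesis is preserved under twists of $D$: $\mathcal{D}_{[5]}$ is twist-closed by definition, and the standard identities relating twists to deletion and contraction express every minor of $D\ast B$ as a twist of some minor of $D$. I may therefore replace $D$ by $D\ast F_0$ for a minimum-size feasible set $F_0$; this puts $\emptyset\in\F(D)$, so $\F_{\min}(D)=\{\emptyset\}$ and $D_{\min}$ is the rank-zero matroid on $E$, for which every subset of $E$ is a separator. Consequently condition~(1) of Theorem~\ref{tm1} is automatically satisfied for every candidate $A\subseteq E$. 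If $w(D)\le 1$ then either $D$ is itself a matroid, or the choice $A=\emptyset$ in Theorem~\ref{tm1} delivers a twist of width one; so I may assume $w(D)\ge 2$, and my task is to find $A\subseteq E$ with $D|A$ a matroid and $D|\BA$ of width one.

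\emph{Main step.} I propose to take $A$ inclusion-maximal among subsets of $E$ for which $D|A$ is a matroid; such maximal $A$ exist because $A=\emptyset$ trivially qualifies. Conditions (1) and (2) of Theorem~\ref{tm1} then hold by construction, and the entire difficulty lies in verifying (3): that $D|\BA$ has width exactly one. The argument I would attempt is by contraposition: if $D|\BA$ has width at least two, I would isolate a short witness (for instance, two feasible sets of $D|\BA$ whose sizes differ by at least two, concentrated on a small ground set) and, together with appropriate deletions and contractions inside $A$ to preserve the witness, exhibit a minor of $D$ isomorphic to one of $D_1,\ldots,D_5$ or to a twist thereof. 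The excluded minors appear calibrated for precisely this purpose: $D_1$ obstructs a loose two-element configuration in which all four subsets are feasible, while $D_2,\ldots,D_5$ capture the three-element failure modes of ``width one after a twist''. The technical heart of the proof is therefore a careful case analysis of the feasible-set pattern on such a short witness, matching each failure mode to a specific member of $\mathcal{D}_{[5]}$.
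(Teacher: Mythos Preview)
Your forward direction and the reduction to $\emptyset\in\F(D)$ are correct and match the paper's argument exactly (the paper packages the twist--minor commutation you invoke as Lemma~\ref{l1}).

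The gap is in your Main step, and it is the whole proof. You correctly locate the difficulty in showing that $D|\BA$ has width at most one (note: at most one, not exactly one; width zero is fine and gives a width-zero twist via Corollary~\ref{c2}), but you do not carry out any of the required analysis. What you describe as ``a careful case analysis of the feasible-set pattern on such a short witness'' is precisely the content of the paper's Lemma~\ref{l2}, and that lemma occupies several pages. Your outline gives no mechanism for producing the witness, no indication of why the list $D_1,\ldots,D_5$ is exhaustive, and no explanation of how the case analysis is organised.

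Moreover, your specific device---choosing $A$ inclusion-maximal with $D|A$ a matroid---is not the paper's device and is not obviously sound. Such maximal $A$ are not unique (e.g.\ if $\{c\},\{d\}\notin\F$ but $\{c,d\}\in\F$, then $\{c\}$ and $\{d\}$ can lie in distinct maximal null sets), and you give no reason why an arbitrary maximal choice should make $w(D|\BA)\le 1$. The paper instead builds an auxiliary graph $G_D$ on $\BL\cup\{v_L\}$, with edges recording two-element feasible sets, and splits on whether $G_D$ is bipartite. In the bipartite case a $2$-colouring \emph{produces} the set $A$ and forces $D|\BA\cong U_{0,|\BA|}$; one then reads off a $D_1$ or $D_2$ minor from $D|A$ if its width exceeds one. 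In the non-bipartite case the paper runs an induction on the length of a shortest odd cycle, using the Symmetric Exchange Axiom to contract two elements and shorten the cycle, with the base case yielding $D_1$, $D_3$, $D_4$, or $D_5$. None of this structure is visible in your sketch, and without it the backward direction is not proved.
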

The proof of this theorem appears at the end of Section~\ref{s3}.

We note that the excluded minors of twists of matroids (i.e., twists of width zero delta-matroids) has been shown, but not explicitly stated, to be 
$( \{a\}, \{ \emptyset , \{a\} \} ) $, $D_3$, and $D_3\ast \{a\}$ by A. Duchamp in~\cite{adfund}. This result can be recovered from Theorem~\ref{t1} by restricting to even delta-matroids, where an \emph{even} delta-matroid is a delta-matroid in which the difference in size between any two feasible sets is even.

Above we  mentioned the close connection between delta-matroids and graphs in surfaces. The width of a delta-matroid can be viewed as the analogue of the genus (or more precisely the Euler genus) of an embedded graph, while twisting is the analogue of S.~Chmutov's partial duality of \cite{Ch09}. Thus characterising twists of width one is the analogue of characterising partial duals of graphs in the real projective plane. The topological graph theoretical analogues of Theorems~\ref{tm1} and~\ref{t1} can be found in \cite{mo13,mo16}.

\section{The proof of the Rough Structure Theorem}\label{s2}

For the convenience of the reader, we recall some standard matroid and delta-matroid terminology.  
Given a delta-matroid $D=(E,\mathcal{F})$ and element  $e\in E$, if $e$ is in every feasible set of $D$ then we say that $e$ is a \emph{coloop} of $D$.
If $e$ is in no feasible set of $D$, then we say that $e$ is a \emph{loop} of $D$.
If $e\in E$ is not a coloop, then $D$ \emph{delete} $e$, denoted by $D\ba e$, is the delta-matroid $(E-e, \{F : F\in \mathcal{F}\text{ and } F\subseteq E-e\})$.
If $e\in E$ is not a loop, then $D$ \emph{contract} $e$, denoted by $D/e$, is the delta-matroid $(E-e, \{F-e : F\in \mathcal{F}\text{ and } e\in F\})$.
If $e\in E$ is a loop or coloop, then $D/e=D\ba e$.
Useful identities that we use frequently are $ D/e = (D\ast  e) \ba e $ and $ D\ba e = (D\ast  e) / e $.
If $D'$ is a delta-matroid obtained from $D$ by a sequence of deletions and contractions, then $D'$ is independent of the order of the  deletions and contractions used in its construction, so we can define $D\ba X/Y$ for disjoint subsets $X$ and $Y$ of $E$, as the result of deleting each element in $X$ and contracting each element in $Y$ in some order.
A \emph{minor} of $D$ is any delta-matroid that is obtained from it by deleting or contracting some of its elements.
The \emph{restriction} of $D$ to a subset $A$ of $E$, written $D|A$, is equal to $D\ba \BA$. Note that if $\emptyset \in \F(D)$ then $F$ is feasible in $D|A$ if and only if $F\subseteq A$ and $F\in \F(D)$.

\medskip

The \emph{connectivity function} $\lambda_M$ of a matroid $M$ on ground set $E$ with rank function $r$  is defined on all subsets $A$ of $E$ by  $\lambda_M(A) = r(A) + r(\BA) - r(E)$. 
Recall that  $A$ is said to be a \emph{separator} of $M$ if $A$ is a union of components of $M$. This happens if and only if $\lambda_M(A)=0$. Moreover, $A$ is a separator if and only if $\BA$ is a separator.

We will use Bouchet's   analogue of the rank function for delta-matroids from~\cite{abrep}. For a delta-matroid $D=(E,\mathcal{F})$, it is denoted by $\rho_D$ or simply $\rho$ when $D$ is clear from the context. Its value on a subset $A$ of $E$ is given by \[\rho(A):=|E|-\min\{|A\bigtriangleup F| : F\in \mathcal{F}\}.\]

The following theorem determines the width of a twist of a delta-matroid.
\begin{theorem}\label{t2}
Let $D=(E,\F)$ be a delta-matroid and $A\subseteq E$. Then the width, $w(D*A)$, of the twist of $D$ by $A$ is given by
\[w(D*A)=w(D|A)+w(D|\BA) +2\,\lambda _{D_{\min}}(A).\]
\end{theorem}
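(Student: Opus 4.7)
The plan is to convert the width formula into an identity involving Bouchet's function $\rho_D$, and then unpack that identity with the Symmetric Exchange Axiom. For any $F\in\F$ the sets $A\btu F$ and $\BA\btu F$ partition $E$, so $|A\btu F|+|\BA\btu F|=|E|$. Consequently the largest feasible set of $D*A$ has size $\rho_D(\BA)$, the smallest has size $|E|-\rho_D(A)$, and
\[ w(D*A)=\rho_D(A)+\rho_D(\BA)-|E|. \]
Writing $a_F:=|F\cap A|$ and $b_F:=|F\cap\BA|$, one has $|A\btu F|=|A|-a_F+b_F$, so $\rho_D(A)=|\BA|+\alpha$ with $\alpha:=\max_{F\in\F}(a_F-b_F)$, and symmetrically $\rho_D(\BA)=|A|+\beta$ with $\beta:=\max_{F\in\F}(b_F-a_F)$. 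Hence $w(D*A)=\alpha+\beta$, and the theorem reduces to proving $\alpha+\beta=w(D|A)+w(D|\BA)+2\lambda_{D_{\min}}(A)$.

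Set $r:=\min_{F\in\F}|F|$ and $\mu_A:=\min_{F\in\F}b_F$. The heart of the argument is two lemmas proved by repeated application of the Symmetric Exchange Axiom. \emph{Lemma A.} The maximum $\alpha$ is attained by some $F\in\F$ with $b_F=\mu_A$. \emph{Lemma B.} The minimum $\mu_A$ is attained by some $F\in\F$ with $|F|=r$, so in particular $\mu_A=r-r_{D_{\min}}(A)$. Both lemmas use the same template: pick a feasible set $F^*$ attaining the relevant extremum, pair it with a competitor $F^{**}$ attaining the subsidiary minimum, choose an element $u\in F^*\btu F^{**}$ lying on the appropriate side of the $(A,\BA)$ partition, and invoke SEA. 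A case analysis on whether the returned $v$ is $u$ itself, lies in $F^*\setminus F^{**}$, or lies in $F^{**}\setminus F^*$, and on whether $v\in A$ or $v\in\BA$, shows that either one obtains a feasible set directly violating the extremality of $F^*$, or one obtains a replacement $F'$ with the same extremal value and strictly larger $|F'\cap F^{**}|$. Since this intersection is bounded above by $|F^{**}|$, the iteration terminates with the claimed conclusion. Executing this case analysis cleanly is the main technical obstacle.

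An induction on $|\BA|$, whose inductive step uses an analogous non-coloop SEA move to show that deleting an element of $\BA$ changes neither $a_F$ nor the set of feasible sets achieving $b_F=\mu_A$, together with a direct calculation in the coloop case, establishes
\[ \F(D|A)=\{F\cap A : F\in\F,\ b_F=\mu_A\}. \]
Combined with Lemma B this gives $\min_{F:\,b_F=\mu_A}a_F=r-\mu_A=r_{D_{\min}}(A)$, so setting $a_A^+:=\max_{F:\,b_F=\mu_A}a_F$ one has $w(D|A)=a_A^+-r_{D_{\min}}(A)$, whence $a_A^+=r_{D_{\min}}(A)+w(D|A)$. Lemma A then yields $\alpha=a_A^+-\mu_A=2r_{D_{\min}}(A)+w(D|A)-r$, and by the symmetric argument $\beta=2r_{D_{\min}}(\BA)+w(D|\BA)-r$. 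Summing,
\[ \alpha+\beta=w(D|A)+w(D|\BA)+2\bigl(r_{D_{\min}}(A)+r_{D_{\min}}(\BA)-r\bigr)=w(D|A)+w(D|\BA)+2\lambda_{D_{\min}}(A), \]
which is the desired identity.
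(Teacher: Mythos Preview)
Your opening computation that $w(D*A)=\rho_D(A)+\rho_D(\BA)-|E|$ matches the paper exactly. From there the two arguments diverge. The paper simply quotes the identity $w(D|A)=\rho(A)-r(A)-n(E)+n(A)$ from \cite{CMNR1} (with $r,n$ the rank and nullity of $D_{\min}$), writes down the same expression for $w(D|\BA)$, adds, and simplifies; the whole thing is four lines. Your route is self-contained: you re-derive that cited identity from scratch by proving Lemmas~A and~B and the description $\F(D|A)=\{F\cap A:\,b_F=\mu_A\}$ directly from the Symmetric Exchange Axiom. The payoff is independence from the external reference; the cost is a page of case analysis that the paper avoids entirely.

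One technical point to tighten. In your template for Lemma~A you assert that the non-contradiction cases always strictly increase $|F'\cap F^{**}|$. That fails in the sub-case $u\in(F^*\cap\BA)\setminus F^{**}$, $v\in(F^*\cap A)\setminus F^{**}$: there $F'=F^*\setminus\{u,v\}$ keeps the same extremal value $a_{F'}-b_{F'}=\alpha$ but $|F'\cap F^{**}|=|F^*\cap F^{**}|$, since neither $u$ nor $v$ lies in $F^{**}$. The fix is to track $|(F^*\cap\BA)\setminus F^{**}|$ instead: this quantity strictly decreases in every non-contradiction case, and once it hits zero you have $F^*\cap\BA\subseteq F^{**}\cap\BA$, forcing $b_{F^*}\le b_{F^{**}}=\mu_A$ and hence equality. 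A similar care is needed in Lemma~B (choose $u$ from $(F^*\setminus F^{**})\cap\BA$ when that set is non-empty, and handle the residual case where $F^*\cap\BA\subseteq F^{**}$ separately). With those adjustments your argument goes through.
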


\begin{proof}
The largest feasible set in $D*A$ has size $\max \{|F\btu A|:F\in \F (D)\}$.  
Take $F'\in\F$ such that $|F'\btu A|$ is maximal.  
Then $|F'\btu \BA |$ is minimal.  
As $\rho (\BA)=|E|-\min\{|F\btu \BA |:F\in\F\}$, we see that $\rho (\BA )=|E|-|F'\btu \BA |=|F'\btu A|$.  
Hence the largest feasible set in $D*A$ has size equal to $\rho (\BA )$.

Next, the size of the smallest feasible set in $D*A$ is $|E|$ minus the size of the largest feasible set in $(D*A)^*=D*\BA $. By an application of the above, it follows that the size of  the smallest feasible set in $D*A$ is $|E|-\rho (A)$.  
Hence $w(D*A)=\rho (\BA )-|E|+\rho (A)$.

We let $r$ and $n$ be the rank and nullity functions, respectively, of $D_{\min}$.  
From \cite{CMNR1}, we know that $w(D|A)=\rho (A)-r(A)-n(E)+n(A)$.  
As $n(A)=|A|-r(A)$ and $n(E)=|E|-r(E)$, 
\begin{align*}
w(D|A)&+w(D|\BA ) \\
&=\rho (A)-r(A)-|E|+r(E)+|A|-r(A) + \rho (\BA )-r(\BA )-|E|+r(E)+|\BA |-r(\BA )\\
&= \rho (\BA )-|E|+\rho (A) -2(r(A)+r(\BA )-r(E))
\\& =w(D*A)-2 (\lambda _{D_{\min}}(A)),
\end{align*}
giving the result. 
\end{proof}

The following two theorems are immediate consequences of Theorem~\ref{t2}. The Rough Structure Theorem, Theorem~\ref{tm1}, follows immediately from the second of them.

\begin{theorem}[Chun et al \cite{CMNR2}]\label{tt2}
Let $D=(E,\F)$ be a delta-matroid, $A\subseteq E$, and $\BA=E-A$. Then $D*A$ is a matroid if and only if  $A$ is a separator of $D_{\min}$,  and both $D|A$ and $D|\BA$ are matroids.
\end{theorem}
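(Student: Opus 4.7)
The plan is to derive Theorem~\ref{tt2} directly from Theorem~\ref{t2}. A delta-matroid is a matroid precisely when its width is zero, so the task reduces to showing that $w(D*A)=0$ if and only if the three listed conditions hold. Since Theorem~\ref{t2} gives
\[ w(D*A)=w(D|A)+w(D|\BA)+2\,\lambda_{D_{\min}}(A), \]
the natural route is to observe that all three summands on the right-hand side are nonnegative, so their sum vanishes if and only if each vanishes individually.

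First I would note that $w(D|A)\ge 0$ and $w(D|\BA)\ge 0$ directly from the definition of width, and that $\lambda_{D_{\min}}(A)\ge 0$ because the rank function of a matroid is submodular: $r(A)+r(\BA)\ge r(A\cup\BA)+r(A\cap\BA)=r(E)+r(\emptyset)=r(E)$. Hence the three summands are simultaneously zero exactly when their sum is zero. Then I would translate each vanishing condition: $w(D|A)=0$ means every feasible set of $D|A$ has the same size, i.e.\ $D|A$ is a matroid; similarly for $D|\BA$; and the statement that $\lambda_{D_{\min}}(A)=0$ iff $A$ is a separator of $D_{\min}$ is already recorded in the paragraph preceding Theorem~\ref{t2}.

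Putting these together yields the biconditional: $D*A$ is a matroid (equivalently $w(D*A)=0$) if and only if $D|A$ and $D|\BA$ are both matroids and $A$ is a separator of $D_{\min}$. There is no real obstacle here; the only mild point to be careful about is that restrictions of a (nonempty-feasible) delta-matroid are themselves delta-matroids so that their widths are defined, but this is part of the standing setup. The whole argument is a short unpacking of Theorem~\ref{t2}.
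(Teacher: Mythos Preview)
Your argument is correct and is exactly the approach the paper takes: it states that Theorem~\ref{tt2} is an immediate consequence of Theorem~\ref{t2}, which is precisely the nonnegativity-of-summands reasoning you spell out. There is nothing to add.
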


\begin{theorem}\label{tt}
Let $D=(E,\F)$ be a delta-matroid, $A\subseteq E$, and $\BA=E-A$. Then $D*A$ has width one if and only if  $A$ is a separator of $D_{\min}$, and one of $D|A$ and $D|\BA$ is a matroid and the other has width one.
\end{theorem}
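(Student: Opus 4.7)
The plan is to derive Theorem~\ref{tt} as an immediate arithmetic consequence of Theorem~\ref{t2}, which gives the identity
\[ w(D\ast A) = w(D|A) + w(D|\BA) + 2\,\lambda_{D_{\min}}(A). \]
The key observation is that each of the three summands on the right-hand side is a non-negative integer: the widths $w(D|A)$ and $w(D|\BA)$ are clearly non-negative by definition, while $\lambda_{D_{\min}}(A) = r(A) + r(\BA) - r(E) \geq 0$ is the standard non-negativity of the matroid connectivity function (a consequence of submodularity of the rank function).

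For the ``if'' direction, I would simply substitute the hypotheses into this identity. If $A$ is a separator of $D_{\min}$ then $\lambda_{D_{\min}}(A) = 0$, and if one of $D|A$ and $D|\BA$ is a matroid and the other has width one, then exactly one of $w(D|A), w(D|\BA)$ is zero and the other is one. The identity then yields $w(D\ast A) = 0 + 1 + 0 = 1$, as required.

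For the ``only if'' direction, assume $w(D\ast A) = 1$. Then the three non-negative integers $w(D|A)$, $w(D|\BA)$ and $2\lambda_{D_{\min}}(A)$ sum to $1$. Since $2\lambda_{D_{\min}}(A)$ is an even non-negative integer, it must equal $0$, forcing $\lambda_{D_{\min}}(A) = 0$, i.e.\ $A$ is a separator of $D_{\min}$. The remaining two non-negative integers $w(D|A)$ and $w(D|\BA)$ then sum to $1$, so precisely one of them is $0$ and the other is $1$; the one with width $0$ has all feasible sets of the same size and is therefore a matroid, while the other has width one.

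No step here is a real obstacle, since the work has already been done in proving Theorem~\ref{t2}; the only small point requiring care is the parity/size argument that pins down $\lambda_{D_{\min}}(A) = 0$ from the fact that its contribution to the sum is doubled. The proof is therefore essentially a one-line unpacking of Theorem~\ref{t2} together with the non-negativity of $\lambda_{D_{\min}}$.
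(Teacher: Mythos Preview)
Your proposal is correct and is exactly the approach the paper takes: the paper states that Theorem~\ref{tt} is an immediate consequence of Theorem~\ref{t2}, and your argument spells out precisely this immediate arithmetic deduction using non-negativity of the widths and of the matroid connectivity function together with the parity constraint on $2\lambda_{D_{\min}}(A)$.
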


For convenience, we write down the following straightforward corollary. It provides the form of the Rough Structure Theorem that we use to find excluded minors in the next section.
\begin{corollary}\label{c2}
Let  $D=(E,\F)$ be a delta-matroid in which $\emptyset$ is feasible. Then the following hold.
\begin{enumerate}
\item  $D$ has a twist of width zero  if and only if there exists $A\subseteq E$ such that $D|A$ and $D|\BA$ are both of width zero.
\item  $D$ has a twist of width one if and only if there exists $A\subseteq E$ such that 
 $D|A$ is a matroid, and  $D|\BA$ is of width one.
\end{enumerate}
\end{corollary}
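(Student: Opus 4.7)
The plan is to reduce both parts directly to Theorems~\ref{tt2} and~\ref{tt} using the hypothesis that $\emptyset \in \F(D)$. The key observation is that under this hypothesis the minimum size of a feasible set of $D$ is $0$, so $\F_{\min}=\{\emptyset\}$ and $D_{\min}$ is the rank-zero matroid on $E$ in which every element is a loop. Consequently every subset $A\subseteq E$ is a union of components of $D_{\min}$, so every $A$ is a separator of $D_{\min}$, equivalently $\lambda_{D_{\min}}(A)=0$ for all $A\subseteq E$. This makes the separator hypothesis appearing in both Theorems~\ref{tt2} and~\ref{tt} vacuous.

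For part (1), recall that a delta-matroid has width zero precisely when it is a matroid, so "$D$ has a twist of width zero" is the same as "$D\ast A$ is a matroid for some $A\subseteq E$''. Theorem~\ref{tt2} identifies this with the conjunction of the (now automatic) separator condition together with "$D|A$ and $D|\BA$ are both matroids", which, using the equivalence of matroids with width-zero delta-matroids, is the stated condition.

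For part (2), apply Theorem~\ref{tt} directly: $D\ast A$ has width one iff $A$ is a separator of $D_{\min}$ and one of $D|A$, $D|\BA$ is a matroid while the other has width one. The separator clause is automatic by the opening observation, and since the roles of $A$ and $\BA$ can be swapped (a twist by $A$ has the same width as a twist by $\BA$ up to relabelling, and both are separators), we may assume that $D|A$ is the matroid and $D|\BA$ is the width-one restriction, which is the stated form. There is no serious obstacle here; the only substantive point to check is the opening observation that $D_{\min}$ has rank zero when $\emptyset$ is feasible, after which both parts are immediate specialisations of the earlier theorems.
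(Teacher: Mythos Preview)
Your proof is correct and follows essentially the same route as the paper's own argument: both observe that when $\emptyset\in\F(D)$ the matroid $D_{\min}$ has every element as a loop, so every $A\subseteq E$ is automatically a separator, reducing the corollary to Theorems~\ref{tt2} and~\ref{tt}. Your parenthetical about ``a twist by $A$ has the same width as a twist by $\BA$ up to relabelling'' is a slightly roundabout way to justify the role-swap in part~(2); it suffices simply to note that if Theorem~\ref{tt} yields $D|B$ of width one and $D|\overline{B}$ a matroid, then taking $A=\overline{B}$ gives the stated form.
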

\begin{proof}
This is a straightforward consequence of the fact that if $\emptyset$ is feasible in $D$, then $D_{\min}$ is the matroid on $E(D)$ where each element is a loop, thus every set $A\subseteq E$ is a separator of $D_{\min}$.
\end{proof}

\section{The proof of the excluded minor characterisation}\label{s3}

We begin this section by verifying that the class of delta-matroids in question is indeed minor-closed.

\begin{proposition}\label{p1}
For each $k\in \mathbb{N}_0$, the set of delta-matroids with a twist of width at most $k$ is minor-closed.
\end{proposition}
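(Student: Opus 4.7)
The plan is the routine one: it suffices to prove that any single-element minor $D\ba e$ or $D/e$ of a delta-matroid $D$ admitting a twist of width at most $k$ also admits such a twist, since iterating this handles arbitrary minors. So fix $e\in E$ and suppose $D\ast A$ has width at most $k$.

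First I would record that width never increases under a single-element deletion or contraction. The feasible sets of $D\ba e$ are precisely those feasible sets of $D$ avoiding $e$, so if $F_{\max}$ and $F_{\min}$ are a largest and smallest feasible set of $D\ba e$ then $|F_{\max}|$ is bounded above by the maximum feasible-set size of $D$ and $|F_{\min}|$ is bounded below by the minimum; hence $w(D\ba e)\le w(D)$. The same argument, applied after the uniform shift $F\mapsto F\ba e$ on the feasible sets containing $e$, yields $w(D/e)\le w(D)$.

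Second, I would verify the standard commutation identities between twist and deletion/contraction. These follow in a line from the definition of twist together with the identities $D/e=(D\ast e)\ba e$ and $D\ba e=(D\ast e)/e$ already recorded in the paper. Explicitly: if $e\notin A$, then $(D\ast A)\ba e=(D\ba e)\ast A$ and $(D\ast A)/e=(D/e)\ast A$; while if $e\in A$, then $(D\ast A)\ba e=(D/e)\ast(A\ba e)$ and $(D\ast A)/e=(D\ba e)\ast(A\ba e)$. In every case each of $D\ba e$ and $D/e$ admits a twist that equals a single-element minor of $D\ast A$, and so by the first step has width at most $w(D\ast A)\le k$, as required.

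The only place the argument asks for any attention is the degenerate setting in which $e$ is a loop or coloop of $D$ or of $D\ast A$: there $D\ba e$ and $D/e$ agree by convention and the four commutation identities above collapse onto one another. Each remains valid, so the proof goes through unchanged after a brief sentence recording the convention. This is the only potential obstacle, and it is a cosmetic one.
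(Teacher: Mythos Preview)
Your proof is correct and follows essentially the same route as the paper's own argument: you establish the same four twist/minor commutation identities and then invoke the fact that deletion and contraction do not increase width. The only cosmetic differences are that you treat the width-monotonicity step first and add an explicit remark on the loop/coloop convention, whereas the paper absorbs both into a single closing sentence.
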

\begin{proof}
Let $D=(E,\mathcal{F})$ and suppose $w(D\ast A) \leq k$ for some $A\subseteq E$. If $E$ is empty the result is trivial, so assume not and let $e\in E$. 
If $e\notin A$ then
$(D\ba e)\ast A = (D\ast A)\ba e$, and
$(D/e)*A=((D*e)\ba e)*A= ((D*e)*A)\ba e = ((D*A)\ast  e) \ba e = (D*A) / e $. 
 Similarly, if $e\in A$ then $e\notin A-e$, so using and extending the previous argument,
$(D/e)\ast (A-e)= (D*(A-e))/e = ((D*A)* e)/e = (D*A)\ba e$, and 
$(D\ba e) \ast (A-e) = (D\ast (A-e)) \ba e  =  ((D\ast A)\ast e)  \ba e     = (D\ast A)/e $. 
In each case we see that  $D/e$  and $D\ba e$ have a twist that can be written as $(D\ast A)/e$ or  $(D\ast A) \ba e$.  Since deletion and contraction never increase width it follows that  $D/e$  and $D\ba e$ have twists of width at most $w(D\ast A) \leq k$. The result follows.
\end{proof}

\begin{lemma}\label{l1}
Let  $D=(E,\F)$ be a delta-matroid  and $A\subseteq E$. Then 
\[\{ H : H \text{ is a minor of } D\ast A \}  =  \{ J\ast (A\cap E(J)) : J \text{ is a minor of } D \}  .  \]
\end{lemma}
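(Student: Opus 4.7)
The plan is to prove both inclusions of the asserted set equality by induction on the number of deletions and contractions needed to produce a given minor, using as the key tool the four commutation identities that were implicit in the proof of Proposition~\ref{p1}. Namely, for any $e\in E$, if $e\notin A$ then $(D\ast A)\ba e=(D\ba e)\ast A$ and $(D\ast A)/e=(D/e)\ast A$, while if $e\in A$ then $(D\ast A)\ba e=(D/e)\ast(A-e)$ and $(D\ast A)/e=(D\ba e)\ast(A-e)$. In every case, a single delete or contract of $D\ast A$ at $e$ may be rewritten as a single delete or contract of $D$ at $e$ followed by a twist by $A\cap(E\ba\{e\})$; the type of operation flips precisely when $e\in A$, and $e$ is removed from the twist set precisely when $e\in A$.

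For the forward containment, I would write an arbitrary minor of $D\ast A$ as $(D\ast A)\ba X/Y$ with $X,Y\subseteq E$ disjoint and induct on $|X|+|Y|$, the base case $X=Y=\emptyset$ giving $D\ast A=D\ast(A\cap E(D))$. Choosing any $e\in X\cup Y$ and applying the appropriate identity pushes the outer delete or contract through the twist, reexpressing the minor as a minor of $D'\ast A'$ where $D'\in\{D\ba e,D/e\}$ and $A'=A\cap(E\ba\{e\})$. By the inductive hypothesis applied inside $D'\ast A'$, this equals $J\ast(A'\cap E(J))=J\ast(A\cap E(J))$ for some minor $J$ of $D'$, and hence of $D$, proving $\subseteq$. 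The reverse containment is symmetric: given $J=D\ba X'/Y'$ a minor of $D$, I would define $X:=(X'\ba A)\cup(Y'\cap A)$ and $Y:=(Y'\ba A)\cup(X'\cap A)$. A direct check confirms that $X$ and $Y$ are disjoint with $X\cup Y=X'\cup Y'$, so $E(J)=E\ba(X\cup Y)$, and running the four identities in reverse converts $J\ast(A\cap E(J))$ into $(D\ast A)\ba X/Y$, exhibiting it as a minor of $D\ast A$.

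I expect no real obstacle beyond careful bookkeeping. The heart of the matter is the observation that whenever an element $e\in X\cup Y$ is stripped away, it is removed from the accumulated twist set exactly when $e\in A$ and is otherwise left alone; consequently, after processing all of $X\cup Y$ the remaining twist set is $A\ba(X\cup Y)=A\cap E(J)$, precisely as the statement demands. The mild subtlety that elements of $X\cap A$ correspond to contractions in $D$ (and elements of $Y\cap A$ to deletions) is encoded in the ``swap on $A$'' formulas defining $X$ and $Y$ above and is dictated by the same four commutation identities, so no new ideas are needed.
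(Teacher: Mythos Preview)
Your proposal is correct and takes essentially the same approach as the paper: both rely on the four commutation identities from the proof of Proposition~\ref{p1}, with the paper simply asserting that the result follows immediately from them while you spell out the induction and the explicit swap $X=(X'\ba A)\cup(Y'\cap A)$, $Y=(Y'\ba A)\cup(X'\cap A)$ for the reverse inclusion.
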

\begin{proof}
In the proof of Proposition~\ref{p1} it was shown that 
if $e\notin A$  then $(D*A) / e  =  (D/e)*A$  
and $ (D\ast A)\ba e = (D\ba e)\ast A $, whereas if $e\in A$ then  $(D*A)\ba e = (D/e)\ast (A-e)$ and 
$(D\ast A)/e = (D\ba e) \ast (A-e)$. The result follows immediately from this. 
\end{proof}

\begin{lemma}\label{l2}
Let  $D$ be a delta-matroid in which the empty set  is feasible. Then $D$ has a twist of width at most 1, or contains a minor isomorphic to one of  $D_1, \ldots,D_5$.
\end{lemma}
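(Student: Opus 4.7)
I aim to prove the contrapositive: assume $D=(E,\F)$ has $\emptyset\in\F$ and no minor isomorphic to any of $D_1,\dots,D_5$, and deduce that $D$ has a twist of width at most one. By Corollary~\ref{c2}(2), this reduces to exhibiting $A\subseteq E$ with $D|A$ a matroid and $D|\BA$ of width at most one. Since $\emptyset$ is feasible in both restrictions, this amounts to requiring that no nonempty feasible set of $D$ lies inside $A$ and that every feasible subset of $\BA$ has size at most one.

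A preliminary structural step is to set $T:=\{e\in E:\{e\}\in\F\}$ and to show that no feasible $F$ with $|F|\ge 2$ is contained in $T$. When $|F|=2$ the restriction $D|F$ is exactly $D_1$; when $|F|=3$ either some pair inside $F$ is feasible (giving a $D_1$ restriction on that pair) or $D|F$ equals $D_2$; and when $|F|\ge 4$ the Symmetric Exchange Axiom applied to $X=F$ and $Y=\{a\}$ for some $a\in F$ produces a strictly smaller feasible subset of $T$, so the case reduces by induction on $|F|$. Consequently any valid $A$ must satisfy $A\cap T=\emptyset$, and every feasible set of size at least two must meet both $A$ and $\BA$.

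The main argument proceeds by strong induction on $|E|$. The bases $|E|\le 2$ are handled by direct enumeration, since the only delta-matroid on at most two elements with $\emptyset$ feasible and no twist of width at most one is $D_1$ itself, which is trivially its own minor. For the inductive step, loops in $D$ are inert under twists and deletions, so I may assume $D$ has no loops. Pick $e\in E$ and apply the inductive hypothesis to $D\ba e$, which has $\emptyset$ feasible and inherits the no-forbidden-minor property; this yields $A'\subseteq E-e$ with $D|A'$ a matroid and $D|(E-e-A')$ of width at most one. I then aim to extend $A'$ to a valid $A$ for $D$ by choosing either $A:=A'$ or $A:=A'\cup\{e\}$.

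The technical heart, and the main obstacle, is the case where neither extension works. The choice $A=A'$ fails precisely when some feasible $F$ with $e\in F$ and $|F|\ge 2$ lies in $(E-e-A')\cup\{e\}$, and $A=A'\cup\{e\}$ fails precisely when some nonempty feasible $G$ with $e\in G$ lies in $A'\cup\{e\}$. I plan to carry out a case analysis splitting on whether $e\in T$ and on the sizes of $F$ and $G$. In each subcase, the simultaneous existence of $F$ and $G$, combined with the matroid and width-one properties already guaranteed by $A'$ and with the Symmetric Exchange Axiom applied to $F$ and $G$ (or to $F$ and a carefully chosen singleton feasible arising from $A'$), will force elements $x,y\in E$ with $D|\{x,y\}$ isomorphic to $D_1$, or elements $x,y,z\in E$ with $D|\{x,y,z\}$ isomorphic to one of $D_3, D_4, D_5$, contradicting the hypothesis. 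I expect the bookkeeping required to identify the right feasibles on which to apply the Symmetric Exchange Axiom in each subcase to be where the bulk of the technical work lies.
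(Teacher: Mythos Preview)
Your inductive scheme has a genuine gap: the claim that whenever neither $A'$ nor $A'\cup\{e\}$ is valid for $D$ a forbidden minor must appear is false. Take $D=(\{e,f,g\},\{\emptyset,\{e\},\{e,f\},\{e,g\}\})$. This is a delta-matroid with $\emptyset$ feasible, it has no minor isomorphic to any $D_i$ (direct check), and it has a twist of width one, namely $D*\{f,g\}$. Deleting $e$ leaves $D\ba e=(\{f,g\},\{\emptyset\})$, for which \emph{every} $A'\subseteq\{f,g\}$ is valid. If the inductive hypothesis hands you $A'=\{f\}$, then $A=\{f\}$ fails because $\{e,g\}\subseteq\BA$ is feasible of size two, while $A=\{e,f\}$ fails because $\{e\}\subseteq A$ is feasible. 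You are now committed to producing a forbidden minor, but none exists. The same obstruction occurs with $A'=\emptyset$ or $A'=\{g\}$; only $A'=\{f,g\}$ extends. Since the hypothesis only asserts the existence of \emph{some} valid $A'$, you have no control over which one you receive, and the case analysis you sketch cannot succeed as stated.

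The underlying reason is that the correct $(A,\BA)$ is a global object: it is essentially a proper $2$-colouring of an auxiliary graph whose edges record the feasible pairs. The paper builds this graph explicitly and splits on whether it is bipartite. When it is, a $2$-colouring yields $A$ directly (and any remaining obstruction is a $D_1$ or $D_2$); when it is not, a shortest odd cycle is located and shown, by induction on the length of the cycle rather than on $|E|$, to force a $D_1$, $D_3$, $D_4$ or $D_5$ minor. A one-vertex-at-a-time extension cannot recover this, because a valid $2$-colouring of a graph minus one vertex need not extend even when the whole graph is bipartite. To salvage your approach you would at minimum need to quantify over all valid $A'$ for $D\ba e$, or specify and justify a canonical choice; either route essentially reconstructs the bipartiteness argument.
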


\begin{proof}
For any delta-matroid  $D$ in which the empty set is feasible, set
\[  L:= \{x\in E(D) : \{x\} \in \F(D)\}  \quad \text{and} \quad \BL=E(D)-L.\]
(Technically we should record the fact that $L$ depends upon $D$ in the notation, however we avoid doing this for notational simplicity. This should cause no confusion.) Note that $L$ may be empty. 
Construct a (simple) graph $G_D$ as follows. 
Take one vertex $v_x$ for each element $x\in  \BL$, and add one other vertex $v_L$. 
The edges of  $G_D$ arise from certain two-element feasible sets of $D$. 
Add an edge $v_xv_y$ to $G_D$ for each pair $x,y\in \BL$ with  $\{x,y\}\in \F(D)$; add an edge $v_xv_L$ to $G_D$ if $\{x,z\}\in \F(D)$ for some $z\in L$. 

We consider two cases: when $G_D$ is bipartite, and when it is not. We will show that if $G_D$ is bipartite then  $D$ must have a twist of width at most one or a minor isomorphic to $D_1$ or $D_2$; if $G_D$ is not bipartite then it must have a minor isomorphic to $D_1$, $D_3$, $D_4$, or $D_5$.

\medskip
\noindent\textbf{\textit{Case 1.}} Let  $D$ be a delta-matroid in which the empty set is feasible, and such that $G_D$ is bipartite. Fix a 2-colouring of $G_D$. Let 
$A$ be the set of elements in $E(D)$ that correspond to the vertices in the colour class containing $v_L$ together with the elements in $L$,
and let  $\BA\subseteq E(D)$ be the set of elements  corresponding to the vertices in the colour class not containing $v_L$. 

We start by showing
\begin{equation}\label{e1}
D|\BA \cong U_{0, |\BA|},
\end{equation}
where $U_{0, |\BA|}$ denotes the uniform matroid with rank zero and $|\BA|$ elements.

To see why \eqref{e1} holds, note that $\F(D|\BA)=\{F : F\subseteq \BA\text{ and } F\in \F(D)\}$.  
Since the elements in $\BA$ correspond to vertices in $\BL$,  
no feasible sets of $D|\BA$ have size one.
Furthermore,  $\F(D|\BA)$ cannot contain any sets of size two since, by the construction of $G_D$, whenever $\{x,y\}\in \F(D)$ the corresponding vertices $v_x$ and $v_y$ are in different colour classes. 
Since $\emptyset \in \F(D|\BA)$, the Symmetric Exchange Axiom ensures that there are no other feasible sets.   
(If $F\in \F(D|\BA)$ with $F\neq \emptyset$, take $x\in \emptyset \btu F$. Then by the Symmetric Exchange Axiom $\emptyset \btu \{x, y\}$ must be in $\F(D|\BA)$ for some $y$, but there are no feasible sets of size one or two.)
This completes the justification of \eqref{e1}. 
 
 Next we examine the feasible sets in $D|A$. Trivially $\emptyset\in \F(D|A)$. 
 The set of feasible sets of $D|A$ of size one is 
$\{ F\in  \F(D|A) :  |F|= 1\} =\{ F\in  \F(D) :  |F|= 1\}  = \{ \{x\} : x\in L \}$.

If   $\F(D|A)$ contains a set $\{x,y\}$ of size two then $x,y\in L$ as otherwise there would be an edge $v_xv_y$ in $G_D$ whose ends are in the same colour class.  It follows in this case that $D|A$ and hence $D$ contains a minor isomorphic to $D_1$.

Now assume that $\F(D|A)$ does not contain a set of size two. 
If $\F(D|A)$ has no sets of size one then, arguing via the Symmetric Exchange Axiom as in the justification of \eqref{e1}, we  have $D|A \cong U_{0,|A|}$. Taken together with~\eqref{e1}, this implies that $A$ satisfies the conditions of the first part of 
Corollary~\ref{c2}, so $D$ has a twist of width zero.

Suppose that $\F(D|A)$ does contain a set of size one.  If it contains no sets of size greater than one then  $D|A$ is of width one, and by combining this with~\eqref{e1}, it  follows from Corollary~\ref{c2} that $D$ has a twist of width one  ($D\ast A$ and $D\ast \BA$ are such twists).
On the other hand,  if  $\F(D|A)$ does contain a set of size greater than one, then, as it does not contain   
a set of size two, the Symmetric Exchange Axiom guarantees there is a set  in  $\F(D|A)$ of size exactly three. (If not, let $F$ be a minimum sized feasible set with $|F|>3$.  Then $F \ba \{x,y\}$ is feasible and of size at least two for some $x,y\in \emptyset \btu F$  contradicting the minimality of $|F|>3$.) Let $\{x,y,z\}\in \F(D|A)$. Then after possibly relabelling its elements, the collection of feasible sets of $D|\{x,y,z\}$ is one of 
\[
  \{ \emptyset , \{x\}, \{y\}, \{z\}, \{x,y,z\}  \}, \quad
     \{ \emptyset , \{x\}, \{y\},  \{x,y,z\}  \}, \quad
     \{ \emptyset , \{x\} , \{x,y,z\}  \}.\]
Only the first of the three cases is possible as 
the Symmetric Exchange Axiom fails for the other two showing that neither is the collection of feasible sets of a delta-matroid.
Hence, restricting $D$ to $\{x,y,z\}$ results in a minor isomorphic to $D_2$. 

Thus we have shown that if $G_D$ is bipartite then $D$ has a twist of width at most one or contains a minor isomorphic to $D_1$ or $D_2$. This completes the proof of Case 1.

\medskip
\noindent\textbf{\textit{Case 2.}} Let  $D$ be a delta-matroid in which the empty set is feasible, and such that $G_D$ is non-bipartite. We will show that $D$ contains a minor isomorphic to one of $D_1$, $D_3$, $D_4$ or $D_5$ by induction on the length of a shortest odd cycle in $G_D$. 

For the base of the induction suppose that $G_D$ has an odd cycle $C$ of length three. There are two sub-cases, when $v_L$ is not in $C$ and when it is. Note that the former sub-case includes the situation where $L=\emptyset$.

\smallskip
\noindent\textbf{\textit{Sub-case 2.1.}} Suppose that $v_L$ is not in $C$. Let $x,y,z\in E(D)$ be the elements corresponding to the three vertices  of $C$. We have $x,y,z\in \BL$, so $\{x\}, \{y\}, \{z\}\notin \F(D)$.  From the three edges of $C$ we have $\{x,y\}, \{y,z\}, \{z,x\}\in \F(D)$. It follows that $D|\{x,y,z\}$ is isomorphic to either $D_3$ or $D_4$ giving the required minor.

\smallskip
\noindent\textbf{\textit{Sub-case 2.2.}} Suppose that $v_L$ is in $C$. Let $v_x, v_y, v_L$ be the vertices in $C$. The edges of $C$ give that $ \{x,y\} \in \F(D)$,  and since $x,y\in \BL$ we have $\{x\}, \{y\} \notin \F(D)$. We also know that there are elements $\al, \be\in L$ such that $\{\al\}, \{\be\}, \{x,\al\}  ,\{y,\be\} \in  \F(D)$, where possibly $\al=\be$.

If $\al=\be$ then $D|\{x,y,\al\}$ must have feasible sets 
\begin{equation}\label{e13}
  \{\emptyset , \{\al\} , \{x,\al\}  ,\{y,\al\} ,\{x,y\}\}  \quad\text{or}\quad   \{\emptyset , \{\al\} , \{x,\al\}  ,\{y,\al\} ,\{x,y\}, \{\al, x,y\}\}.    \end{equation}
The first case gives a minor of $D$ isomorphic to $D_5$; in the second case, $(D|\{x,y,\al\}) / \al$ is a  minor of $D$ isomorphic to $D_1$.

If $\al\neq \be$ then the feasible sets  of $D|\{x,y,\al,\be\}$ of size zero or one are exactly  $\emptyset$, $\{\al\}$, and $\{\be\}$. 
From $G_D$, the feasible sets of size two include  $\{x,\al\}  ,\{y,\be\} ,  \{x,y\}$. 
If $\{ y,\al\}$ is also feasible then $D|\{x,y,\al\}$ is isomorphic to one of the delta-matroids arising from \eqref{e13}, so $D$ has a minor isomorphic to $D_1$ or $D_5$.
The case when $\{ x,\be\}$ is feasible is similar.
If $\{\al,\be\}$ is feasible then  $D|\{\al,\be\}$ is isomorphic to $D_1$.

The case that remains is when the feasible sets of $D|\{x,y,\al,\be\}$ of size at most two are exactly
\[  \emptyset , \{\al\}, \{\be\}, \{x,\al\}  ,\{y,\be\} ,  \{x,y\}. \]
By applying the Symmetric Exchange Axiom to each of the pairs of feasible sets
$(\{\al\},\{x,y\})$,  $(\{\be\},\{x,y\})$, $(\{\be\},\{x,\al\})$ and $(\{\al\},\{y,\be\})$, one can show that each of the three element sets, \[ \{\al,x,y\},   \{\be,x,y\}, \{\al,\be,x\},  \{\al,\be,y\}, \]
is feasible in $D|\{x,y,\al,\be\}$. Finally, $\{\al,\be, x,y\}$ may or may not be feasible. 

If $\{\al,\be, x,y\}$ is feasible then $(D|\{x,y,\al,\be\})/\{x,y\}$ is isomorphic to $D_1$; if $\{\al,\be, x,y\}$ is not feasible then $(D|\{x,y,\al,\be\})/\{\al\}$ is isomorphic to $D_5$.

This completes the base of the induction.

For the inductive hypothesis, we assume that, for some $n>3$, if $D$ is a delta-matroid such that $\emptyset\in\mathcal{F}(D)$ and $G_D$ has an odd cycle of length less than $n$, then $D$ has a minor isomorphic to $D_1,D_3,D_4$, or $D_5$.

Suppose that $\emptyset\in\mathcal{F}(D)$ and a shortest odd cycle $C$ of
$G_D$ has length $n$. Again there are two sub-cases: when $v_L$ is not in $C$ and when it is.

\smallskip
\noindent\textbf{\textit{Sub-case 2.3.}} Suppose that $v_L$ is not in $C$. Let $C=v_{x_1}v_{x_2} \ldots v_{x_n}v_{x_1}$. Since each $x_i\in \BL$ and $C$ is the shortest odd cycle in $G_D$, 
\begin{equation}\label{e5}
 \emptyset , \{ x_1,x_2 \},   \{ x_2,x_3 \}, \ldots,\{ x_n,x_1 \}   
\end{equation}
is a complete list of the feasible sets of size at most two in $D|\{x_1,\ldots, x_n\}$. 

Next, we show
\begin{equation}\label{e3}
\{x_i,x_j,x_k\} \notin \F(D|\{x_1,\ldots, x_n\}), \quad \text{ for any distinct } 1\leq i,j,k\leq n.
\end{equation}
To see why \eqref{e3} holds, first note that, since $n>3$, every set of three distinct vertices in the cycle includes a non-adjacent pair.  If $\{x_i,x_j,x_k\}$ were feasible in $D|\{x_1,\ldots, x_n\}$, then, without loss of generality, $\{x_j,x_k\}\notin\mathcal{F}(D|\{x_1,\ldots, x_n\})$.  As $x_i\in\{x_i,x_j,x_k\}\btu\emptyset$, an application of the Symmetric Exchange Axiom would imply that $\{x_i,x_j,x_k\}\btu\{x_i,z\}$ is feasible for some $z\in\{x_i,x_j,x_k\}$.  Thus $\{x_j,x_k\},\{x_j\}$, or $\{x_k\}$ would be feasible, a contradiction to~\eqref{e5}.  Thus~\eqref{e3} holds.

Next we show that, taking indices modulo $n$,
\begin{equation}\label{e4}
\{x_i,x_{i+1},x_j,x_{j+1}\} \in \F(D|\{x_1,\ldots, x_n\}),
\end{equation}
for any $i$ and $j$ such that $1 \leq i,j \leq n$ and $i,i+1,j,j+1$ are pairwise distinct.

For this,
first suppose that neither $x_{i+1}$ and $x_j$ nor $x_{j+1}$ and $x_i$ are adjacent in $C$. 
Then by \eqref{e5}, $\{ x_i,x_{i+1}\}$ and  $\{ x_j,x_{j+1}\}$ are feasible.  As $x_{j}$ is in their symmetric difference, by the Symmetric Exchange Axiom, $\{x_i,x_{i+1}\}\btu\{x_j,y\}$ is feasible for some $y\in\{x_i,x_{i+1},x_j,x_{j+1}\}$.  Thus $\{x_i,x_{i+1},x_j\}, \{x_i,x_j\}, \{x_{i+1},x_j\}$ or $\{x_i,x_{i+1},x_j,x_{j+1}\}$ is feasible.  By~\eqref{e5} and \eqref{e3}, $\{x_i,x_{i+1},x_j,x_{j+1}\}$ is feasible.
If $x_{i+1}$ and $x_j$ are adjacent then the Symmetric Exchange Axiom implies that $\{x_i,x_{i+1}\}\btu\{x_{i+3},z\}$ is feasible for some $z\in\{x_i,x_{i+1},x_{i+2},x_{i+3}\}$.  Again,~\eqref{e5} and~\eqref{e3} imply that $\{x_i,x_{i+1},x_{i+2},x_{i+3}\}$ must be feasible. The other case is identical.  
This completes the justification of \eqref{e4}.

Combining \eqref{e5}--\eqref{e4} gives that all of 
$ \emptyset , \{ x_1,x_2 \},   \{ x_2,x_3 \}, \ldots,\{ x_{n-2},x_1 \} $, 
but none of $\{x_1\}, \ldots, \{x_{n-2}\}$, 
 are  feasible in $(D|\{x_1,\ldots, x_n\})/\{ x_{n-1},x_n\}$. Hence the graph $G_{(D|\{x_1,\ldots, x_n\})/\{ x_{n-1},x_n\}}$  has a shorter odd cycle than $G_D$. By the inductive hypothesis, $(D|\{x_1,\ldots, x_n\})/\{ x_{n-1},x_n\}$ and hence $D$ has a minor isomorphic to one of $D_1$, $D_3$, $D_4$ or $D_5$.

\smallskip
\noindent\textbf{\textit{Sub-case 2.4.}} Suppose that $v_L$ is in $C$. Let $C=v_Lv_{x_2}v_{x_3}\dots v_{x_n}v_L$.
The edges of the cycle give that, for each $2\leq i\leq n-1$,  $ \{x_i,x_{i+1}\} \in \F(D)$.  Also, for $2\leq i\leq n$, since $x_i\in \BL$ we have $\{x_i\} \notin \F(D)$. 
We also know that there are elements $\al, \be\in L$ such that $\{\al\}, \{\be\}, \{\al, x_2\}  ,\{\be,x_n\} \in  \F(D)$ where possibly $\al=\be$. (This possibility is covered in the following analysis.)

When $\al\neq\be$, if $\{\al,\be\}\in\mathcal{F}(D)$, then $D|\{\al,\be\}$ is isomorphic to $D_1$,  therefore we assume $\{\al,\be\}\notin\mathcal{F}(D)$. Using that $C$ is a shortest odd cycle, the feasible sets of $D|\{\al, \be, x_2,\ldots, x_n\}$ of size at most two are exactly
\begin{equation}\label{e6}
 \emptyset , \{\al\} , \{\be\}, \{\al,x_2\},    \{ x_2,x_3 \},   \{ x_3,x_4 \}, \ldots,  \{ x_{n-1},x_n \}, \{ \be, x_n\}    . 
\end{equation}
An argument similar to the justification of \eqref{e3} gives that 
\begin{equation}\label{e7}
\{x_i,x_j,x_k\} \notin \F(D|\{\al,\be, x_2,\ldots, x_n\}), \quad \text{ for any distinct } 2\leq i,j,k\leq n.
\end{equation}
However 
\begin{equation}\label{e8}
\{\al, x_{n-1}, x_n\},  \{\be, x_{n-1}, x_n\} \in \F(D|\{\al,\be,x_2,\ldots, x_n\}).
\end{equation}
To see this note that $x_{n-1} \in \{\al \} \btu \{ x_{n-1},x_n \}$, so the Symmetric Exchange Axiom gives that one of $\{\al,  x_{n-1} \}$, $\{x_{n-1}\}$, or $\{\al,  x_{n-1}, x_n \}$ is feasible, and  we know from \eqref{e6} that the feasible set must be the third option.   
That $\{\be, x_{n-1}, x_n\}$ is feasible follows from a similar argument.

We next show that for each $2\leq i < n-2$,
\begin{equation}\label{e9}
\{\al, x_2 , x_{n-1}, x_n\}  , \{x_i, x_{i+1} , x_{n-1}, x_n\} , \{\be, x_{n-2} , x_{n-1}, x_n\} \in \F(D|\{\al,\be,x_2,\ldots, x_n\}).
\end{equation}
For this, first consider $x_2 \in \{ x_{n-1}, x_n\}\btu \{\al, x_2\}$. The Symmetric Exchange Axiom implies that $\{x_{n-1},x_n\}\btu\{x_2,z\}$ is feasible for some $z\in\{\al,x_2,x_{n-1},x_n\}$.  By~\eqref{e6} and \eqref{e7}, $z=\al$, thus $\{\al,x_2,x_{n-1},x_n\}$ is feasible. 
Next, to show that $\{x_i,x_{i+1},x_{n-1},x_n\}$ is feasible, we take $x_i\in\{x_{n-1},x_n\} \btu\{x_i,x_{i+1}\}$ and apply the Symmetric Exchange Axiom as above to see that $\{x_{n-1},x_n\}\btu\{x_i,z\}$ is feasible, where $z$ must equal $x_{i+1}$. 
Lastly, to show that $\{\be,x_{n-2},x_{n-1},x_n\}$ is feasible, we first show that $\{\be,x_{n-2},x_n\}\notin \F(D|\{\al,\be,x_2,\ldots, x_n\})$. If $\{\be,x_{n-2},x_n\}$ were feasible, then since $x_{n-2}\in \emptyset\btu\{\be,x_{n-2},x_n\}$, the Symmetric Exchange Axiom would give $\{x_{n-2}\}$, $\{\be,x_{n-2}\}$ or $\{x_{n-2},x_n\}$ as feasible, a contradiction. Now showing that $\{\be,x_{n-2},x_{n-1},x_n\}$ is feasible comes from taking $x_{n-2}\in\{\be,x_n\}\btu\{x_{n-2},x_{n-1}\}$. The Symmetric Exchange Axiom gives that $\{\be,x_n\}\btu\{x_{n-2},z\}$ is feasible for some $z\in\{\be,x_{n-2},x_{n-1},x_n\}$, of which $z=x_{n-1}$ is the only possibility.

From \eqref{e6}--\eqref{e9} it follows that all of $\emptyset$ ,   $\{\al\}$, $\{\be\}$, $\{\al,x_2\}$, $\{ x_2,x_3 \}$, $\ldots$, $\{ \be, x_{n-2} \}$, but none of $\{x_2\}$, $\ldots$, $\{x_{n-2}\}$,  are  feasible in $(D|\{\al, x_2,\ldots, x_n,\be\})/\{ x_{n-1},x_n\}$. Hence the graph $G_{(D|\{\al, x_2,\ldots, x_n,\be\})/\{ x_{n-1},x_n\}}$  has a shorter odd cycle than $G_D$. The inductive hypothesis gives that $(D|\{\al, x_2,\ldots, x_n,\be\})/\{ x_{n-1},x_n\}$ and hence $D$ has a   minor isomorphic to one of $D_1$, $D_3$, $D_4$ or $D_5$. 
This completes the proof of the sub-case, and the lemma.
\end{proof}

We now apply Lemma~\ref{l2} to prove our excluded minor characterisation of the family of delta-matroids admitting a twist of width at most one.

\begin{proof}[Proof of Theorem~\ref{t1}]
All twists of the delta-matroids $D_1, \ldots,D_5$ are of width at least two. Since the set of delta-matroids with a twist of width at most one is minor-closed it follows that 
no minor of a delta-matroid with a twist of width at most one is isomorphic to 
a member of $\mathcal{D}_{[5]}$. This proves one direction of the theorem.

Conversely suppose that every twist of a delta-matroid $D=(E,\F)$ is of width at least two. Let $A\in \F$. Then $D*A$ is a delta-matroid in which $\emptyset$ is feasible and in which every twist is of width at least two. By Lemma~\ref{l2}, $D*A$ has a minor isomorphic to one of $D_1, \ldots,D_5$. It follows from Lemma~\ref{l1} that $D$ has a minor isomorphic to a member of $\mathcal{D}_{[5]}$.
\end{proof}

\bibliographystyle{plain}
\bibliography{bib}

\end{document}